    \def\qed{\hfill$\sqcap\kern-8.0pt\hbox{$\sqcup$}$\\}
    \def\beq{\begin{eqnarray}}
    \def\eeq{\end{eqnarray}}
    \def\beqq{\begin{eqnarray*}}
    \def\eeqq{\end{eqnarray*}}
\DeclareMathOperator{\re}{Re}
\DeclareMathOperator{\im}{Im}
    \def\r{{\mathbb R}}
    \def\c{{\mathbb C}} 
    \def\d{{\textnormal d}}
    \def\i{{\textnormal i}}
\mathchardef\pFcomma=\mathcode`,
\newtheorem{theorem}{Theorem}
\newtheorem{corollary}{Corollary}
\theoremstyle{definition}
\title{A direct evaluation of an integral of Ismail and Valent}
\author{
{Alexey Kuznetsov
\footnote{Dept. of Mathematics and Statistics,  York University,
4700 Keele Street, Toronto, ON, M3J 1P3, Canada.  \newline
E-mail:  kuznetsov@mathstat.yorku.ca  } 
 }}
 \date{\today}
\begin{document}
\maketitle

\begin{abstract} 
We give a direct evaluation of a curious integral identity, which follows from the work of 
Ismail and Valent on the Nevanlinna parametrization of solutions to a certain indeterminate moment problem. 
\end{abstract}

{\vskip 0.15cm}
 \noindent {\it Keywords}:  Cauchy Residue Theorem,
Jacobi elliptic functions, theta functions,  indeterminate moment problem, Nevanlinna parametrization \\
 \noindent {\it 2010 Mathematics Subject Classification }: Primary 33E05, Secondary 30E05

\section{Introduction and the main results}

At the International Conference on Orthogonal Polynomials and q-Series, which was held in Orlando in May 2015 in celebration
of the 70$^{\textnormal{th}}$ birthday of Mourad Ismail, Dennis Stanton gave a plenary talk titled ``A small slice of Mourad's work". One of the topics in that talk was about ``the mystery integral of Mourad Ismail": a curious integral that has first appeared in the paper 
\cite{IV1998} by Ismail and Valent (see also \cite{Chen1998} for a special case). To present this integral, we fix $k\in (0,1)$ and denote by 
$$
K(k)=\frac{\pi}{2} \times  {}_2F_1(\tfrac{1}{2},\tfrac{1}{2};1;k^2)
$$   
the complete elliptic integral of the first kind. We also denote $K=K(k)$, $k'=\sqrt{1-k^2}$ and $K'=K(k')$. The ``mystery integral", which Dennis Stanton referred to in his talk, is the following one:
\begin{equation}\label{mystery_integral}
\frac{1}{2}\int_{\r} \frac{\d x}{\cos(\sqrt{x}K)+\cosh(\sqrt{x}K')}=1. 
\end{equation}
This is essentially formula (1.16) in \cite{IV1998}, after correcting the typo -- an extra factor of $1/2$ multiplying $\sqrt{x}$. 

The identity \eqref{mystery_integral} is indeed rather unusual and mysterious. 
First of all, there is a free parameter $k$ that affects in a non-trivial way the integrand in the left-hand side, but 
the right-hand side stays constant. The appearance of the complete elliptic integral in combination with trigonometric and hyperbolic functions is also uncommon. The integrand looks very simple (after all, it only has two elementary
trigonometric functions and well-known complete elliptic integrals), but this simplicity is deceptive. In fact, this is the most striking feature of this integral: it is not at all clear how to prove such a result directly. The identity 
\eqref{mystery_integral} follows   
 as a by-product of explicit computations related to Nevanlinna parametrization in a certain indeterminate moment problem,
 see \cite{IV1998}.
 It is the goal of this paper is to evaluate the integral in \eqref{mystery_integral} directly, without using the theory of the indeterminate moment problem.

Our main result is the following theorem, which gives a more general statement than \eqref{mystery_integral}. As we will see later, this theorem allows to compute explicitly  all moments of the probability measure appearing in \eqref{mystery_integral}. 
In what follows we will be working with Jacobi elliptic functions; we refer the reader to \cite{Olver}[Chapter 22] for their definition and various properties. 

\begin{theorem}\label{theorem3}
Assume that $k \in (0,1)$ and denote $k'=\sqrt{1-k^2}$, $K=K(k)$ and $K'=K(k')$. Then for $u \in \c$ satisfying $|\re(u)|<K$ and $|\im(u)|<K'$ we have
\begin{equation}\label{theorem3_eqn1}
\frac{1}{2}\int_{\r} \frac{\sin(\sqrt{x}u)}{{\sqrt{x}}} \times \frac{\d x}
{\cos(\sqrt{x}K)+\cosh(\sqrt{x}K')}=
 \frac{{\textnormal{sn}}(u,k)}{{\textnormal{cd}}(u,k)}.
\end{equation}
\end{theorem}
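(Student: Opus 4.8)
The plan is to evaluate the integral by the Cauchy Residue Theorem, to resum the resulting series in closed form, and to identify it with $\textnormal{sn}(u,k)/\textnormal{cd}(u,k)$ by comparing periods, poles and residues. First I would note that
$$f(x):=\frac{\sin(\sqrt x\,u)}{\sqrt x}\cdot\frac{1}{\cos(\sqrt x K)+\cosh(\sqrt x K')}$$
is a meromorphic function of $x\in\c$, because $\sin(\sqrt x\,u)/\sqrt x$, $\cos(\sqrt x K)$ and $\cosh(\sqrt x K')$ are entire functions of $x$ and the denominator equals $2$ at $x=0$; moreover $\cos a+\cosh b=2\cos\tfrac{a+\i b}{2}\cos\tfrac{a-\i b}{2}$ factors it as $2\cos\!\big(\tfrac{(K+\i K')\sqrt x}{2}\big)\cos\!\big(\tfrac{(K-\i K')\sqrt x}{2}\big)$, so the poles of $f$ are the points $x_n^{\pm}=\big((2n+1)\pi/(K\pm\i K')\big)^{2}$, $n\ge0$. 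These are simple ($(K+\i K')/(K-\i K')$ is not a quotient of two odd integers, so the two cosine factors never vanish together), the $x_n^{+}$ lie in the open lower half-plane and the $x_n^{-}$ in the open upper half-plane.

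I would prove \eqref{theorem3_eqn1} first for real $u\in(-K,K)$ and then extend it to the whole strip by analytic continuation: both sides are analytic for $|\re u|<K$, $|\im u|<K'$ — on the left because $|f(x)|$ decays like $e^{-(K'-|\im u|)\sqrt x}$ as $x\to+\infty$ and like $e^{-(K-|\re u|)\sqrt{|x|}}$ as $x\to-\infty$, which legitimizes differentiation under the integral — and they agree on the segment $(-K,K)$. For real $u\in(-K,K)$ I would close the contour of $\tfrac12\int_\r f\,\d x$ in the lower half-plane along the circles $|x|=R_N$ with $\sqrt{R_N}=2\pi N/\sqrt{K^2+K'^2}$, which pass midway between consecutive poles; together with the elementary bound on $|\sin(\sqrt x\,u)|$, a suitable lower bound for $\big|\cos\!\big(\tfrac{(K+\i K')\sqrt x}{2}\big)\cos\!\big(\tfrac{(K-\i K')\sqrt x}{2}\big)\big|$ on those circles will force the circular arcs to contribute nothing in the limit, giving $\tfrac12\int_\r f\,\d x=-\pi\i\sum_{n\ge0}\mathrm{Res}_{x=x_n^{+}}f$. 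Setting $w_n:=(2n+1)\pi/(K+\i K')$ and using $w_n(K+\i K')=(2n+1)\pi$ one finds $\cosh(w_nK')=-\cos(w_nK)$, $\sinh(w_nK')=-\i\sin(w_nK)$, whence the residues are computed explicitly and
$$\tfrac12\int_\r f\,\d x=\frac{2\pi}{K+\i K'}\sum_{n\ge0}\frac{\sin\!\big(\tfrac{(2n+1)\pi u}{K+\i K'}\big)}{\sinh\!\big(\tfrac{(2n+1)\pi K'}{K+\i K'}\big)}.$$
Expanding $1/\sinh(\cdot)$ as a geometric series and summing the resulting geometric series in $n$ (a standard Mittag--Leffler manipulation) rewrites this as
$$S(u):=\frac{\pi}{K+\i K'}\sum_{j\in\zz}\csc\!\Big(\frac{\pi\big(u+(2j+1)\i K'\big)}{K+\i K'}\Big).$$

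It then remains to prove $S(u)=\textnormal{sn}(u,k)/\textnormal{cd}(u,k)$. The last series converges locally uniformly off its poles because $2\i K'/(K+\i K')$ has positive imaginary part, and it displays $S$ as an elliptic function with periods $2(K+\i K')$ and $2\i K'$ — equivalently $2K$ and $2\i K'$ — whose only poles modulo $2K\zz+2\i K'\zz$ are the simple ones at $u\equiv\i K'$ with residue $+1$ and at $u\equiv K$ with residue $-1$ (it may also be resummed into a quotient of Jacobi theta functions). On the other hand $\textnormal{sn}(u,k)/\textnormal{cd}(u,k)=\textnormal{sn}(u)\textnormal{dn}(u)/\textnormal{cn}(u)$ has the same periods, and by the standard properties of Jacobi elliptic functions (\cite{Olver}[Chapter~22]) the same poles and residues: it is $\sim(u-\i K')^{-1}$ near $u=\i K'$, and $\sim-(u-K)^{-1}$ near $u=K$ since $\textnormal{cn}(K)=0$ with $\textnormal{cn}'(K)=-k'$ while $\textnormal{sn}(K)\textnormal{dn}(K)=k'$. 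Hence $S(u)-\textnormal{sn}(u,k)/\textnormal{cd}(u,k)$ extends to a bounded entire function, so it is constant, and it vanishes because both functions are $0$ at $u=0$.

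The part I expect to be the main obstacle is the contour estimate. All the poles $x_n^{\pm}$ lie on the two rays $\arg x=\mp2\arctan(K'/K)$, so the circular arcs $|x|=R_N$ pass near them and the Jordan-type bound is essentially sharp there; it is precisely the hypothesis $|u|<K$ (and no stronger one) that makes the arc contribution vanish, which is why the identity is proved first on the real segment and the general statement is then obtained by analytic continuation.
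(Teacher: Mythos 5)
Your proposal is correct, and while it shares the paper's overall skeleton (residue theorem turning the integral into a series, proof first on a one-dimensional segment of $u$, then analytic continuation to the rectangle), it differs genuinely in the two places that matter. First, the contour geometry: the paper substitutes $x=(2z/K)^2$, works on a bent contour in the $z$-plane and translates it by $w_N$, proving the identity first for $u=v(K+\i K')/2$, $v\in(-1,1)$, where a uniform exponential bound on the shifted contour is easy; you stay in the $x$-plane, take $u$ real in $(-K,K)$, and close with semicircles of radii $\sqrt{R_N}=2\pi N/\sqrt{K^2+K'^2}$ threading midway between the poles $x_n^{+}$ -- your residue computation (via $\sinh(w_nK')=-\i\sin(w_nK)$, $\cosh(w_nK')=-\cos(w_nK)$) and the resulting series $\frac{2\pi}{K+\i K'}\sum_{n\ge0}\sin(w_nu)/\sinh(w_nK')$ check out, and your arc estimate, though the fussier of the two (it needs the standard lower bound $|\cos\zeta|\ge c\,e^{|\im\zeta|}$ on circles $|\zeta|=N\pi$ near the pole ray, with decay rate degenerating exactly as $|u|\to K$), is correctly set up and is indeed where the hypothesis $|\re u|<K$ enters. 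Second, and more substantially, the identification step: the paper sums its series by quoting the DLMF Fourier expansion 22.11.5 and then performs four modular transformations of theta functions to pass from the parameter $t=(1+\tau)/(1-\tau)$ back to $\tau$, whereas you resum into the partial-fraction series $S(u)=\frac{\pi}{K+\i K'}\sum_{j\in\zz}\csc\bigl(\pi(u+(2j+1)\i K')/(K+\i K')\bigr)$ and identify it with $\textnormal{sn}\,\textnormal{dn}/\textnormal{cn}$ by matching periods $2K,2\i K'$, the simple poles at $u\equiv\i K'$ (residue $+1$) and $u\equiv K$ (residue $-1$), and the value at $0$, via Liouville's theorem; I verified the residue bookkeeping on both sides and it is right. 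What each route buys: yours is self-contained and avoids theta functions and modular transformations entirely, at the price of the more delicate arc estimate and the (routine, absolutely convergent for $|u|<K$) interchange of sums in the Mittag--Leffler resummation; the paper's route keeps the contour estimate trivial and reduces the special-function work to citations of tabulated identities, at the price of a longer chain of transformations whose purpose is opaque until the end.
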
 
\begin{proof}
Our plan is to establish \eqref{theorem3_eqn1} for $u=v(K+\i K')/2$ with $v\in (-1,1)$,
and then apply an analytic continuation argument to extend this result to other values of $u$. Thus, we fix $v \in (-1,1)$ and we denote  
\begin{equation}\label{def_I}
I:=\frac{1}{2}\int_{\r} \frac{\sin(\sqrt{x}v(K+\i K')/2)}{{\sqrt{x}}} \times \frac{\d x}
{\cos(\sqrt{x}K)+\cosh(\sqrt{x}K')}.
\end{equation}

Our first step is to change the variable of integration $x=(2z/K)^2$ in \eqref{def_I}. This implies $z=K\sqrt{x}/2$, and the original contour of integration $\r \ni x$ is mapped into the contour $ L \ni z$, where $L$ consists of two half-lines 
$(+\i \infty, 0] \cup [0,+\infty)$, see Figure \ref{fig1}. This contour is traversed in the direction $+\i \infty \to 0 \to +\infty$. After this change of variables we obtain
\begin{equation}\label{eqn_I2}
I=\frac{2}{K} \int_{L} \frac{\sin(zv(1+\tau)) \d z}{\cos(2z)+\cos(2z\tau)},
\end{equation}
where we have denoted 
\begin{equation*}
\tau:=\i  \frac{K'}{K}. 
\end{equation*}
Using trigonometric sum-to-product identity we rewrite \eqref{eqn_I2} in the form
\begin{equation}\label{eqn_I3}
I=\frac{1}{K}\int_{L} \frac{\sin(zv(1+\tau)) \d z}{\cos(z(1+\tau)) \cos(z(1-\tau))}.
\end{equation}

Our second step is to compute the integral in \eqref{eqn_I3} via Cauchy's Residue Theorem. 
Note that the integrand in \eqref{eqn_I3} is a meromorphic function that has only simple poles. Only the poles lying in the 
first quadrant are of importance to us, and these are given by  
$$
z_n:=\pi (n-1/2)\frac{1}{1-\tau},  \;\;\; n \in {\mathbb N}.
$$ 
We also introduce the following notation: 
$$
w_n:=\pi n \frac{1}{1-\tau}, \; {\textnormal{ and }} \;
t:=\frac{1+\tau}{1-\tau}.
$$
It is clear that the points $z_n$ all lie on a ray in the first quadrant, and $w_n$ are the midpoints between $
z_n$ and $z_{n+1}$, see Figure \ref{fig1}. 
\begin{figure}[t]
\centering
\includegraphics[height =9cm]{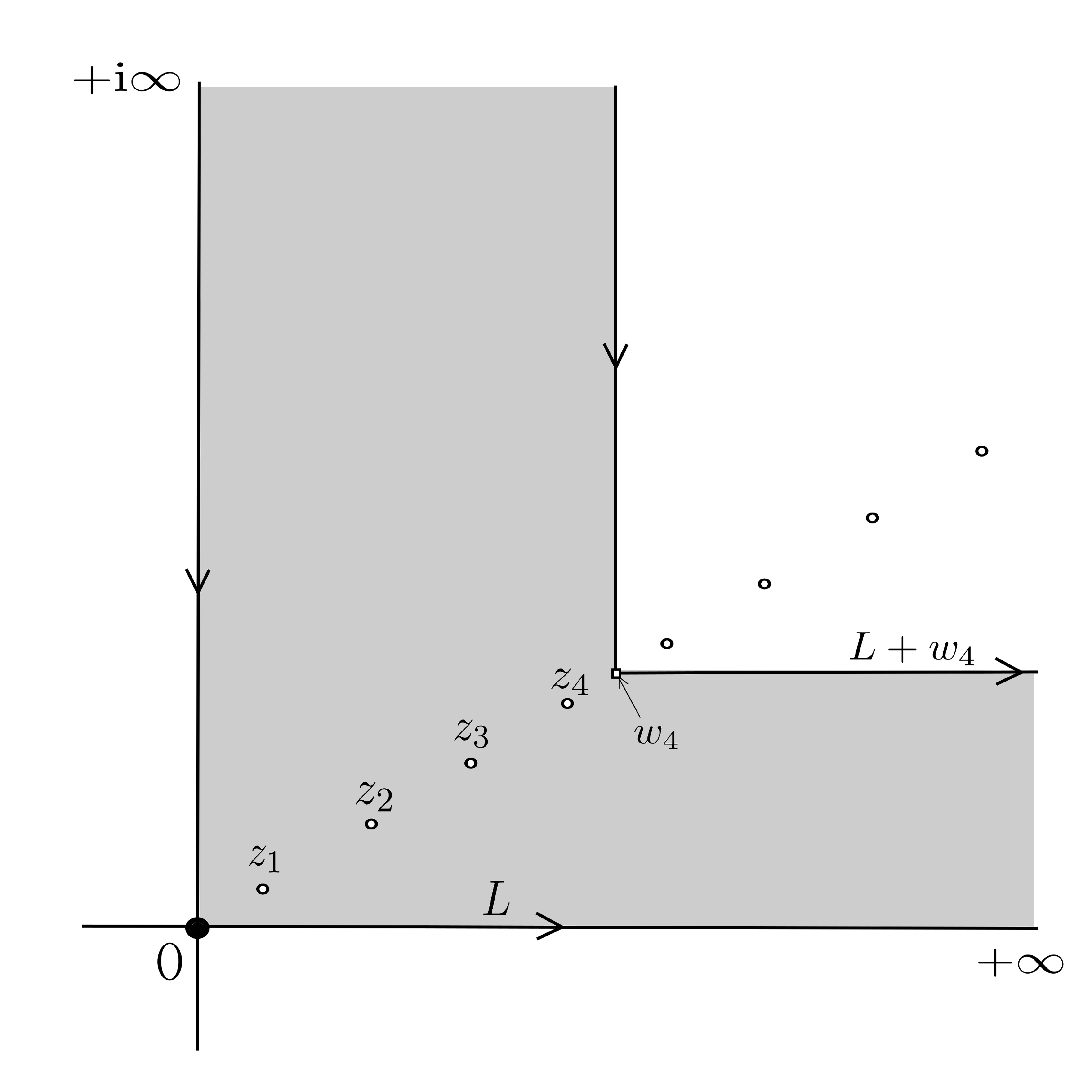} 
\caption{Here we see the contour of integration $L$ that is used in formula \eqref{eqn_I3}, the poles at points $\{z_n\}_{n\ge 1}$ and the shifted contour of integration
$L+w_N$ when $N=4$.}
\label{fig1}
\end{figure}

We choose $N \in {\mathbb N}$ and we shift the contour of integration $L \mapsto L+w_N$, apply the Cauchy Residue Theorem 
and take into account the residues at $z=z_n$, $1\le n \le N$:
\begin{equation}\label{I_to_I_n}
I= 2 \pi \i \times \frac{1}{K} \sum\limits_{n=1}^{N} r_n(v)
+I_N,
\end{equation}
where we have denoted 
\begin{equation}\label{def_I_m}
I_N:= \frac{1}{K} \int_{L+w_N} \frac{\sin(vz (1+\tau)) \d z}{\cos(z(1+\tau)) \cos(z(1-\tau))},
\end{equation}
and
\begin{align}\label{formula_for_rn}
r_n(v):=  {\textnormal{Res}} \Big( \frac{\sin(vz (1+\tau)) }{\cos(z(1+\tau)) \cos(z(1-\tau))}\; \Big \vert \; z=z_n \Big )=\frac{(-1)^{n}}{1-\tau} 
\frac{\sin(\pi (n-1/2) t v)}
{\cos(\pi (n-1/2) t)}.
\end{align}
Note that shifting the contour of integration is justified, since the integrand decays exponentially fast as $z\to \infty$ in the area between the two contours $L$ and $L+w_N$ (this is the gray area in Figure \ref{fig1}). 

Next we plan to show that $I_N \to 0$ as $N\to +\infty$. By changing the variable of integration 
$z=w_N+w$ we rewrite \eqref{def_I_m} in the form
\begin{equation}\label{I_m2}
I_N=\frac{(-1)^N}{K} \int_{L} 
\frac{\sin(v w (1+\tau)+\pi N t v) \d w}{\cos(w(1+\tau)+\pi N t) \cos(w(1-\tau))}.
\end{equation}
Note that $\arg(1+\tau) \in (0,\pi/2)$ and $\arg(1-\tau)\in (-\pi/2,0)$, therefore 
\begin{equation*}
\im(t)>0 \; {\textnormal{ and }} \; \im(w(1+\tau))>0, \;\;\; {\textnormal{ for all }} \; w \in L.
\end{equation*}
The above conditions imply $\im(w(1+\tau)+\pi N t)>10$ for all $w \in L$ and all $N$ large enough. 
Using the following trivial estimates 
\begin{align*}
&|\sin(a)|<\exp(\im(a)), \;\qquad  {\textnormal{ for }} \; a\in \c \; {\textnormal{ such that }} \; \im(a)>0,\\
&|\cos(a)|>\exp(\im(a))/10, \;\;\;\; {\textnormal{ for }} \; a\in \c \; {\textnormal{ such that }} \; \im(a)>10,
\end{align*}
we conclude that for all $z\in L$ and all $N$ large enough we have
\begin{align}\label{estimate}
\Big| \frac{\sin(vz (1+\tau)+\pi N t v)}{\cos(z(1+\tau)+\pi N t)} \Big|
&< 10 \exp((v-1) \im(z(1+\tau))+\pi N \im(t)(v-1))\\ \nonumber
&<10 \exp( \pi N  \im(t)(v-1)).
\end{align}

Combining \eqref{I_m2} and \eqref{estimate} we obtain
$$
|I_N|\le  \frac{10}{K} \exp(\pi N  \im(t)(v-1)) \times  \int_{L} 
\frac{|\d z|}{|\cos(z(1-\tau))|},
$$
and the right-hand side converges to zero as $N \to +\infty$ (recall that $v-1<0$ and $\im(t)>0$). 
The above result and formula \eqref{I_to_I_n} imply the following identity
\begin{equation}\label{I_inf_sum}
I=\frac{2\pi \i}{K(1-\tau)} \sum\limits_{n=1}^{\infty}
(-1)^{n}
\frac{\sin(\pi (n-1/2) t v)}
{\cos(\pi (n-1/2) t)}.
\end{equation}

Our third step is to express the infinite sum in \eqref{I_inf_sum} in terms of Jacobi elliptic functions. 
Formula 22.11.5 in \cite{NIST} tells us that  
$$
\pi \sum\limits_{n=1}^{\infty} (-1)^n  \frac{\sin(\pi (n-1/2) z)}{\cosh(\pi (n-1/2) K'/K)}
=- K k k'
\frac{{\textnormal{sn}}(K z,  k)}
{{\textnormal{dn}}(K z,  k)}.
$$
Using this result and formulas 22.2.4 and 22.2.6 in \cite{NIST}, which express Jacobi elliptic functions in terms of theta functions, and 
formulas 22.2.2 in \cite{NIST}, which express the constants $k$, $k'$ and $K$ in terms of theta functions, we arrive
at the following expression
\begin{equation}\label{I_theta_1}
I=-\frac{\pi \i}{K(1-\tau)} 
\theta_2(0,t) \theta_4(0,t)
\frac{\theta_1( \pi tv/2,t)}
{\theta_3( \pi  \pi tv/2,t)}.
\end{equation}
Here $\theta_i(z,t)$ (with $z,t\in \c$ and $\im(t)>0$) are the four theta functions, as defined in formulas 20.2.1-20.2.4 in \cite{NIST}.

Our plan is to apply transformations of theta functions $\theta_i(\cdot,t)$ with respect to the parameter $t$ so that 
we obtain an expression involving $\theta_i(\cdot, \tau)$. This will be done in four steps, and the sequence of transformations is summarized here
\begin{align*}
&t=\frac{1+\tau}{1-\tau} \longmapsto t_1:=t+1=\frac{2}{1-\tau} \longmapsto
t_2:=t_1/2=\frac{1}{1-\tau}  \longmapsto \\ 
 & \qquad \qquad \qquad \; \, t_3:=-1/t_2=\tau-1 \longmapsto t_4:=t_3+1=\tau. 
\end{align*}

\vspace{0.25cm}
\noindent
{\bf Transformation 1, $t\longmapsto t_1$:} 
We apply formulas 20.7.26-20.7.29 in \cite{NIST}
to the expression in \eqref{I_theta_1} and obtain
\begin{equation}\label{I_theta_2}
I=-\frac{\pi}{K(1-\tau)} \theta_2(0,t_1) \theta_3(0,t_1)
\frac{\theta_1(\pi tv/2,t_1)}
{\theta_4(\pi tv/2,t_1)}.
\end{equation}

\vspace{0.25cm}
\noindent
{\bf Transformation 2, $t_1\longmapsto t_2$:} 
We apply formulas  20.7.11-20.7.12 in \cite{NIST} to the expression in \eqref{I_theta_2} and obtain
\begin{equation}\label{I_theta_3}
I=-\frac{\pi}{2K(1-\tau)} \theta_2(0,t_2)^2 
\frac{\theta_1( \pi tv/4,t_2)\theta_2( \pi tv/4,t_2)}
{\theta_3( \pi tv/4,t_2)\theta_4(\pi tv/4,t_2)}.
\end{equation}

\vspace{0.25cm}
\noindent
{\bf Transformation 3, $t_2\longmapsto t_3$:} 
We apply formulas  20.7.30-20.7.33 in \cite{NIST} to the expression in \eqref{I_theta_3} and obtain
\begin{equation}\label{I_theta_4}
I=-\frac{\pi}{2K} \theta_4(0,t_3)^2 
\frac{\theta_1( \pi tv t_3/4,t_3)\theta_4( \pi tv t_3/4,t_3)}
{\theta_3( \pi tv t_3/4,t_3)\theta_2( \pi tv t_3/4,t_3)}.
\end{equation}

\vspace{0.25cm}
\noindent
{\bf Transformation 4, $t_3\longmapsto \tau$:} 
Finally, we apply formulas 20.7.26-20.7.29 in \cite{NIST} to the expression in \eqref{I_theta_4} and obtain
\begin{equation}\label{I_theta_5}
I=-\frac{\pi}{2K} \theta_3(0,\tau)^2 
\frac{\theta_1(  \pi tv t_3/4,\tau)\theta_3( \pi tv t_3/4,\tau)}
{\theta_4( \pi tv t_3/4,\tau)\theta_2( \pi tv t_3/4,\tau)}.
\end{equation}

Now we check that $\pi tv t_3/4=-\pi v (1+\tau)/4$, we apply formulas 22.2.4 and 22.2.8 in \cite{NIST} 
and rewrite the expression in \eqref{I_theta_5} in terms of Jacobi elliptic functions, which gives us
\begin{equation*}
I=\frac{{\textnormal{sn}}(vK(1+\tau)/2,k)}{{\textnormal{cd}}(vK(1+\tau)/2,k)}.
\end{equation*}
Recalling our definition of $I$ in \eqref{def_I}, we see that we have established formula \eqref{theorem3_eqn1}
for $u=v(K+\i K')/2$ with $v\in (-1,1)$.

As the final step, we need to show that \eqref{theorem3_eqn1} holds true in the bigger region
$D:=\{ u \in \c \; : \; |\re(u)|<K, \; |\im(u)|<K'\}$. 
This is easy to achieve by analytic continuation. Indeed, the integral in the left hand side of \eqref{theorem3_eqn1} converges 
 absolutely and uniformly for all $u$ on compact subsets of $D$, thus this integral defines an analytic function on $D$. The right-hand side of  
 \eqref{theorem3_eqn1} is also analytic in $D$ (one can check this by locating the poles of 
 ${\textnormal{sn}}(u,k)$ and the roots of ${\textnormal{cd}}(u,k)$, see Tables 22.4.1 and 22.4.2 in \cite{NIST}). Thus, by analytic continuation, the identity \eqref{theorem3_eqn1} is valid not only for $u=v(K+\i K')/2$ with $v\in (-1,1)$, but for all $u\in D$. 
\end{proof}

By expanding both sides in \eqref{theorem3_eqn1} in Taylor series in $u$ we obtain the following result.
The mystery integral identity \eqref{mystery_integral} follows  by setting $n=0$ in the formula \eqref{moments} below. 
\begin{corollary}\label{corollary1}
With the notation of Theorem \ref{theorem3} we have
\begin{equation}\label{moments}
\frac{1}{2}\int_{\r} \frac{x^n \d x}
{\cos(\sqrt{x}K)+\cosh(\sqrt{x}K')}=
(-1)^n \times \frac{\d^{2n+1}}{\d u^{2n+1}}
 \frac{{\textnormal{sn}}(u,k)}{{\textnormal{cd}}(u,k)}
 \Big \vert_{u=0},
\end{equation}
for $n\ge 0$. 
\end{corollary}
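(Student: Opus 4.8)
The plan is to derive \eqref{moments} from \eqref{theorem3_eqn1} by expanding both sides into power series in $u$ about $u=0$ and equating coefficients. On the left-hand side I would insert the elementary identity
$$
\frac{\sin(\sqrt{x}\,u)}{\sqrt{x}}=\sum_{n\ge 0}\frac{(-1)^n x^n}{(2n+1)!}\,u^{2n+1},
$$
which is valid for every $x\in\r$ (with $\sqrt{x}$ the principal branch, so that $(\sqrt{x})^{2n}=x^n$ even when $x<0$) and every $u\in\c$, and then interchange the summation with the integral over $\r$. Denoting by $m_n:=\tfrac12\int_{\r}x^n\,(\cos(\sqrt{x}K)+\cosh(\sqrt{x}K'))^{-1}\,\d x$ the moment on the left of \eqref{moments}, this produces the identity
$$
\sum_{n\ge 0}\frac{(-1)^n}{(2n+1)!}\,m_n\,u^{2n+1}=\frac{{\textnormal{sn}}(u,k)}{{\textnormal{cd}}(u,k)},
$$
valid for $u$ near $0$, after which \eqref{moments} amounts to matching the coefficients of $u^{2n+1}$ on the two sides.

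The one step requiring care is the term-by-term integration, which I would justify by Tonelli's theorem applied to the absolute values. First, each $m_n$ is finite: the denominator $\cos(\sqrt{x}K)+\cosh(\sqrt{x}K')$ is continuous and strictly positive on $\r$, it grows like $\tfrac12 e^{K'\sqrt{x}}$ as $x\to+\infty$, and (putting $x=-s$, so that $\cos(\sqrt{x}K)=\cosh(\sqrt{s}K)$ and $\cosh(\sqrt{x}K')=\cos(\sqrt{s}K')$) like $\tfrac12 e^{K\sqrt{s}}$ as $x\to-\infty$, so $x^n$ divided by it is integrable for every $n\ge0$. Second, for $|u|<\min(K,K')$ one has
$$
\sum_{n\ge 0}\frac{|u|^{2n+1}\,|x|^n}{(2n+1)!\,\big(\cos(\sqrt{x}K)+\cosh(\sqrt{x}K')\big)}=\frac{\sinh\!\big(|u|\sqrt{|x|}\big)}{\sqrt{|x|}\,\big(\cos(\sqrt{x}K)+\cosh(\sqrt{x}K')\big)},
$$
and the right-hand side is integrable over $\r$ precisely because $\sinh(|u|\sqrt{|x|})$ grows strictly slower than the denominator, i.e.\ slower than $e^{K'\sqrt{x}}$ as $x\to+\infty$ and slower than $e^{K\sqrt{|x|}}$ as $x\to-\infty$. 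Hence the iterated sum and integral converge absolutely, the interchange is legitimate on the disc $\{|u|<\min(K,K')\}$, and this disc — being a neighbourhood of $0$ contained in the region $D$ of Theorem \ref{theorem3} — is all that is needed to read off Maclaurin coefficients.

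Finally, for the right-hand side I would note that $u\mapsto{\textnormal{sn}}(u,k)/{\textnormal{cd}}(u,k)$ is analytic near $u=0$, which was already verified at the end of the proof of Theorem \ref{theorem3}, and that it is an odd function of $u$, since ${\textnormal{sn}}(\cdot,k)$ is odd and ${\textnormal{cd}}(\cdot,k)$ is even. Its Maclaurin series therefore contains only odd powers, with the coefficient of $u^{2n+1}$ equal to
$$
\frac{1}{(2n+1)!}\,\frac{\d^{2n+1}}{\d u^{2n+1}}\frac{{\textnormal{sn}}(u,k)}{{\textnormal{cd}}(u,k)}\Big\vert_{u=0}.
$$
Equating this with $(-1)^n m_n/(2n+1)!$ from the series above and cancelling the factorials gives $m_n=(-1)^n\,\tfrac{\d^{2n+1}}{\d u^{2n+1}}[{\textnormal{sn}}(u,k)/{\textnormal{cd}}(u,k)]\big\vert_{u=0}$, which is exactly \eqref{moments}. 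The main — indeed the only genuine — obstacle is the domination estimate of the middle paragraph; once that is in place, the argument reduces to uniqueness of power-series coefficients.
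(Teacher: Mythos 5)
Your proposal is correct and follows essentially the same route as the paper, which simply expands both sides of \eqref{theorem3_eqn1} in Taylor series in $u$ and matches coefficients. The only difference is that you spell out the domination estimate justifying the term-by-term integration (and the growth rates of the denominator as $x\to\pm\infty$), a detail the paper leaves implicit; your estimates are accurate.
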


\section{A more general version of the mystery integral}

In the paper \cite{IV1998} Ismail and Valent compute explicitly the functions $D(x)$ and $B(x)$ appearing in the Nevanlinna parametrization of the indeterminate moment problem, which has the same moments as in \eqref{moments}. These functions are  
\begin{align*}
D(x)&=-\frac{4}{\pi} \sin(\sqrt{x}K/2) \sinh(\sqrt{x} K'/2), \\
B(x)&=\frac{2}{\pi} \ln(k/k') \sin(\sqrt{x}K/2) \sinh(\sqrt{x} K'/2)
+\cos(\sqrt{x}K/2) \cosh(\sqrt{x} K'/2), 
\end{align*}
see formulas (4.16) and (4.17) in \cite{IV1998}. For $t \in \r$ and $\gamma>0$ we define 
$$
w(x;t,\gamma):=\frac{\gamma/\pi}{(D(x)-tB(x))^2+\gamma^2 B(x)^2}, \;\;\; x\in \r.  
$$
As was proved in \cite{IV1998} using the Nevanlinna parametrization and the theory of indeterminate moment problems, the measures $w(x;t,\gamma)\d x$ have the same moments for all $t\in \r$ and $\gamma>0$. One can check (after some tedious algebraic computations) that 
for all $x\in \r$
$$
w(x;t^*, \gamma^*)=
\frac{1/2}
{\cos(\sqrt{x}K)+\cosh(\sqrt{x}K')}, 
$$
provided that
$$
\gamma^*=\frac{4}{\pi(1+C^2)}, \;\;\; t^*=-C\gamma^*, \;
{\textnormal{ and }} \; C=\frac{2}{\pi} \ln(k/k'). 
$$

 Combining this fact with Corollary \ref{corollary1} we obtain the following result, which generalizes Theorem \ref{theorem3}. 

\begin{theorem}\label{theorem2}
Assume that $t\in \r$, $\gamma>0$, $k \in (0,1)$ and denote $k'=\sqrt{1-k^2}$, $K=K(k)$ and $K'=K(k')$. Then for $u \in \c$ satisfying $|\re(u)|<K$ and $|\im(u)|<K'$ we have
\begin{equation}\label{theorem2_eqn1}
\int_{\r} \frac{\sin(\sqrt{x}u)}{{\sqrt{x}}} \times 
w(x;t,\gamma) \d x=
 \frac{{\textnormal{sn}}(u,k)}{{\textnormal{cd}}(u,k)}.
\end{equation}
\end{theorem}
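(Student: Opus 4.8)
The plan is to reduce Theorem \ref{theorem2} to Theorem \ref{theorem3} via the properties of the Nevanlinna parametrization, exactly as the preceding discussion in Section 2 suggests. The key observation is that all the measures $w(x;t,\gamma)\,\d x$ share the same moments, and since the moment problem in question is indeterminate, these measures are distinct but moment-equivalent; in particular, the specific measure $\tfrac12(\cos(\sqrt{x}K)+\cosh(\sqrt{x}K'))^{-1}\,\d x$ appearing in Theorem \ref{theorem3} is itself one of them, namely $w(x;t^*,\gamma^*)\,\d x$ with the explicit $t^*,\gamma^*$ recorded above.

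First I would make precise the claim that $w(x;t,\gamma)\,\d x$ has the same moments for all $t\in\r$, $\gamma>0$; this is the content of the Nevanlinna parametrization as established in \cite{IV1998}, so I may invoke it directly. Next I would verify the algebraic identity $w(x;t^*,\gamma^*)=\tfrac12(\cos(\sqrt{x}K)+\cosh(\sqrt{x}K'))^{-1}$ with the stated values of $t^*$, $\gamma^*$, $C$ — a routine but lengthy computation expanding $D(x)^2$, $B(x)^2$, and $D(x)B(x)$ using the explicit trigonometric-hyperbolic forms of $D$ and $B$ together with the double-angle identities $\cos(\sqrt{x}K)=1-2\sin^2(\sqrt{x}K/2)$ and $\cosh(\sqrt{x}K')=1+2\sinh^2(\sqrt{x}K'/2)$. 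Then, since $\sin(\sqrt{x}u)/\sqrt{x}=\sum_{n\ge0}(-1)^n x^n u^{2n+1}/(2n+1)!$ is, for fixed $u\in D$, a bounded-by-polynomial-growth... more precisely an entire function of $x$ whose coefficients decay fast enough, I would integrate term by term against $w(x;t,\gamma)\,\d x$: the moment-equivalence gives
\begin{equation*}
\int_{\r}\frac{\sin(\sqrt{x}u)}{\sqrt{x}}\,w(x;t,\gamma)\,\d x
=\sum_{n\ge0}\frac{(-1)^n u^{2n+1}}{(2n+1)!}\int_{\r}x^n w(x;t,\gamma)\,\d x,
\end{equation*}
and the right-hand side is independent of $(t,\gamma)$ because each moment is. Evaluating it at $(t^*,\gamma^*)$ and applying Corollary \ref{corollary1} (equivalently, resumming back into $\operatorname{sn}/\operatorname{cd}$ via Theorem \ref{theorem3}) yields \eqref{theorem2_eqn1}.

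The main obstacle is the justification of the term-by-term integration, i.e. controlling the tails of $w(x;t,\gamma)$: unlike the special weight, a general $w(x;t,\gamma)$ need not decay — in fact for generic $(t,\gamma)$ the denominator $(D(x)-tB(x))^2+\gamma^2B(x)^2$ does not grow, so $w(x;t,\gamma)$ is only bounded and $\int x^n w\,\d x$ converges only because of oscillation/cancellation, not absolute decay. Thus Fubini is not immediately available. I would handle this by truncation: write $\sin(\sqrt{x}u)/\sqrt{x}$ as its partial Taylor sum plus a remainder, bound the remainder integral using the known finiteness of a higher moment of $w(x;t,\gamma)$ (which follows from the moment-equivalence, since those moments equal the finite moments of the special weight), and let the truncation order tend to infinity; uniformity for $u$ on compact subsets of $D$ then follows, and an analytic-continuation argument identical to the one ending the proof of Theorem \ref{theorem3} extends the identity from real $u$ (or $u$ on the diagonal segment) to all of $D$, though in fact both sides are already seen to be analytic on $D$ so the identity propagates from any subinterval.
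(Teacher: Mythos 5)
Your overall route is the same as the paper's: cite Ismail--Valent for the fact that all the measures $w(x;t,\gamma)\,\d x$ share one set of moments, verify algebraically that the special weight of Theorem \ref{theorem3} is exactly $w(x;t^*,\gamma^*)$, and then transfer Corollary \ref{corollary1} (equivalently Theorem \ref{theorem3}) through the common moments, finishing with analytic continuation in $u$. That is precisely the reduction the paper performs, so the skeleton is correct; the only substantive difference is that you try to make the term-by-term integration explicit, which the paper leaves implicit.

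Unfortunately, that is exactly where your argument as written fails. Your premise that for generic $(t,\gamma)$ the denominator $(D(x)-tB(x))^2+\gamma^2B(x)^2$ ``does not grow'' is false: it is a positive definite quadratic form in $(D,B)$, hence bounded below by $c(t,\gamma)\,(D(x)^2+B(x)^2)$, and since $(D,B)$ is an invertible linear image of the pair $\bigl(\sin(\sqrt{x}K/2)\sinh(\sqrt{x}K'/2),\ \cos(\sqrt{x}K/2)\cosh(\sqrt{x}K'/2)\bigr)$, one gets $D(x)^2+B(x)^2\ge c\,\sinh^2(\sqrt{x}K'/2)$ as $x\to+\infty$ and $\ge c\,\sinh^2(\sqrt{|x|}K/2)$ as $x\to-\infty$; so $w(x;t,\gamma)$ decays like $e^{-\sqrt{x}K'}$, resp.\ $e^{-\sqrt{|x|}K}$. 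This decay is not an optional convenience: for $x<0$ the kernel equals $\sinh(\sqrt{|x|}\,u)/\sqrt{|x|}$, which for real $u>0$ is positive and exponentially large, and $w\ge 0$, so without pointwise decay the left-hand side of \eqref{theorem2_eqn1} would diverge outright --- no oscillation can rescue an integral of a nonnegative integrand. Moreover, the substitute you propose (bounding the Taylor remainder by higher moments of the special weight) does not close the gap: the crude bound $\int_{\r}|x|^n w\,\d x\le m_0+m_{2n}$ with $m_{2n}\asymp (4n+1)!\,c^{-4n-2}$ makes the tail series $\sum_{n>M}|u|^{2n+1}(m_0+m_{2n})/(2n+1)!$ divergent for every $u\ne 0$. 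With the decay estimate in hand one gets instead $\int_{\r}|x|^n w(x;t,\gamma)\,\d x=O\bigl((2n+1)!\,c^{-2n-2}\bigr)$ with $c=\min(K,K')$, the interchange is legitimate by absolute convergence for $|u|<\min(K,K')$, and the same estimate yields local uniform convergence, hence analyticity, of the left-hand side on all of $D$ --- which your final analytic-continuation step also needs. So the gap is repairable, but the exponential decay of $w(x;t,\gamma)$ must be proved rather than worked around.
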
 

It turns out that the integral in \eqref{theorem2_eqn1} also can be evaluated directly, without using the Nevanlinna parametrization or the theory of the indeterminate moment problem. To do this, one only needs to show directly that the measures
$ w(x;t,\gamma)\d x$ have the same moments. 
We plan to present this approach in a more general setting in 
 the forthcoming paper \cite{Kuznetsov}. 

\section*{Acknowledgements}

 The research was supported by the Natural Sciences and Engineering Research Council of Canada. 
  We would like to thank Mourad Ismail for helpful discussions.


\end{document}